\newcommand{\bM}{\overline{M}}
\newcommand{\cM}{\mathcal{M}}
\newcommand{\cL}{\mathfrak{L}}
\newcommand{\pp}{\mathbb{P}}
\newcommand{\cc}{\mathbb{C}}
\renewcommand{\qq}{\mathbb{Q}}
\newcommand{\zz}{\mathbb{Z}}
\newcommand{\nn}{\mathbb{N}}
\renewcommand{\ss}{\mathbb{S}}
\newcommand{\pgl}{\mathbf{PGL}}
\newcommand{\cV}{\mathcal{V}}
\newcommand{\fm}{\mathfrak{m}}
\newcommand{\cW}{\mathcal{W}}
\newcommand{\cP}{\mathcal{P}}
\newcommand{\cG}{\mathcal{G}}
\newcommand{\cH}{\mathcal{H}}
\newcommand{\tT}{\mathbb{T}}
\newcommand{\ud}[2]{_{#1}^{(#2)}}
\newcommand{\lp}{\left(}
\newcommand{\rp}{\right)}
\newtheorem{theorem}{Theorem}[section]
\newtheorem{proposition}[theorem]{Proposition}
\theoremstyle{remark}
\newtheorem*{remark}{Remark}
\DeclareMathOperator{\ch}{ch}
\DeclareMathOperator{\D}{\mathbf{D}}
\DeclareMathOperator{\fix}{fix}
\DeclareMathOperator{\Tr}{Tr}
\DeclareMathOperator{\rk}{rk}
\title{Equivariant Cohomology of certain Moduli of Weighted Pointed 
Rational Curves}
\author{Chitrabhanu Chaudhuri}
\begin{document}
\maketitle 

\section{Introduction}
In \cite{Has} Hassett introduces and studies the moduli spaces of weighted pointed stable
curves. A weighted pointed curve is a nodal curve with a sequence of smooth marked points,
each assigned a rational number between 0 and 1. A subset of the  marked points may 
coincide if the sum of their weights is at most 1. 

The moduli spaces are connected, smooth and proper Deligne-Mumford stacks. In the special 
case of genus zero the moduli spaces are smooth projective varieties. Throughout this paper
we work over $\cc$ as the base field and we always consider cohomology with $\cc$ coefficients.

Consider the weight data 
\begin{equation} \label{eq:wtdata}
  \mathcal{A}(m,n) = \left(\underbrace{1,\ldots,1}_{m}, \underbrace{1/n,\ldots,1/n}_{n}
  \right) \quad m+n\geq 3,\  m \geq 2 \ .
\end{equation} 
Let 
\begin{equation*}
  \bM_{0,m|n} = \bM_{0, \mathcal{A}(m,n)} \ .
\end{equation*}
$\bM_{0,m|n}$ parametrises nodal curves with $m+n$ smooth marked points such that
the first $m$ marked points are distinct but any subset of the last $n$ marked points
can coincide. There is naturally an action of $S_m \times S_n$ on $\bM_{0,m|n}$. Here 
$S_m$ permutes the first $m$ marked points and $S_n$ permutes the last $n$. 

In this paper we study the induced action of $S_m\times S_n$ on the cohomology of $\bM_{0,m|n}$ 
and calculate the equivariant Poincar\'{e} polynomial for some small values of $m$ and $n$. 

Let $M_{0,m|n}$ be the interior of the the moduli space, parametrizing only the smooth curves. 
We first derive the $S_m\times S_n$ character on $H^*(M_{0,m|n})$, and write down a generating 
function for the characters. We then describe a recipe for calculating the generating function
for the $S_m\times S_n$ character of $H^*(\bM_{0,m|n})$. This is achieved by analysing a 
spectral sequence relating the cohomology of $M_{0,m|n}$ to that of $\bM_{0,m|n}$. 

It should be noted that when $n=0$, we simply get the moduli of stable rational curves with
$m$ marked points. In this case the equivariant cohomology was studied by Getzler 
\cite{Get}.

In another direction when $m=2$, the moduli spaces under consideration are the 
Losev- Manin spaces of \cite{LM}. The $S_2\times S_n$ action on the cohomology  was 
determined in this case by Bergstr\"{o}m and Minabe \cite{BM2}.

Finally Bergstrom and Minabe \cite{BM1} give a recursive method for calculating the 
equivariant Poincar\'{e} polynomial of $\bM_{0,m|n}$ for all $m$ and $n$. However our
method seems more direct. We use techniques developed by Getzler \cite{Get} and 
Getzler and Kapranov \cite{GK}. We adopt the notation $\bM_{0, m|n}$ from \cite{BM2}.

\textbf{Aknowledgements.} The author is grateful for fruitful discussions with Prasit 
Bhattacharya, Ezra Getzler and Ronnie Sebastian. A major part of the work was done while 
the author was a postdoctoral fellow at the Max Planck Institute for Mathematics, Bonn. 

\section{Preliminaries on $\bM_{0,m|n}$}
Let $\mathcal{A}(m,n)$ be as in \eqref{eq:wtdata}. Following Hassett \cite{Has},  
$\bM_{0,m|n}$ is the moduli of weighted pointed stable curves of of genus zero 
corresponding to the weight data $\mathcal{A}(m,n)$. 

When $m>3$, $\bM_{0,m|0}$ is simply the moduli of stable rational curves with 
$m$ marked points. We abbreviate it as $\bM_{0,m}$.

Denote by $M_{0,m|n}$ the open subvariety parametrising the smooth curves. 

\subsection{The stable curves}
An $\mathcal{A}(m,n)$-stable curve $(C; p_1,\ldots,p_{m+n})$ is a nodal curve with 
smooth marked points $p_i$. The marked points of $C$ along with the nodes will be called 
special points.  We shall call the first $m$ marked points along with the nodes special 
points of type 1, whereas the last $n$ marked points will be referred to as special 
points of type 2. The curve $C$ satisfies the following,
\begin{itemize}
\item Arithmetic genus of $C$ is $0$.
\item The points $\{p_1,\ldots,p_m\}$ are all distinct.
\item Any subset of the points $\{ p_{m+1},\ldots, p_{m+n} \}$ can coincide, but these
      points are all distinct from $\{p_1,\ldots,p_m\}$.
\item Any irreducible component of $C$ has at least 3 special points with at least 2 of 
      type 1.
\end{itemize}

The varieties $\bM_{0,m|n}$ are smooth and projective and $\bM_{0,m|n} \backslash 
M_{0,m|n}$ is a divisor with normal crossings. Ceyhan \cite{Cey} studies the 
cohomology of $\bM_{0,\mathbf{A}}$, for any weight data $\mathbf{A}$. As a special case 
it follows that all the cohomology of $\bM_{0,m|n}$ is algebraic. This means that all 
the odd degree cohomology groups vanish and the even cohomology groups are isomorphic 
to the Chow groups
\begin{equation*} 
  H^{2i+1}(\bM_{0,m|n}, \qq) =0 \quad \text{and} 
  \quad H^{2i}(\bM_{0,m|n}, \qq) \cong A^{i}(\bM_{0,m|n},\qq).
\end{equation*}   

\subsection{Dual graphs}
A graph will be a triple $(F, V, \sigma)$. Where 
\begin{enumerate}
\item $F$ is the set of flags;
\item $V$ is a partition of $F$; 
\item $\sigma$ is an involution on $F$.
\end{enumerate}
The parts of $V$ are the vertices of the graph. For $v\in V$, let $F(v) = 
\{f\in v \mid f\in F\}$ be the flags incident on $v$. The fixed points of $\sigma$ are
the leaves. The set of leaves will be denoted by $L$, and those incident on a vertex 
$v$ denoted as $L(v)$. The two cycles of $\sigma$ will be the edges of the graph and the 
set of edges denoted by $E$.

\textbf{Colouring of a graph} $G$ consists of a set $X$ and a function $c: F(G) \to 
X$ such that $c(f) = c(\sigma f)$ for every flag $f$. A colouring assigns a colour
(an element of $X$) to each flag such that both flags of an edge have the same colour. 
It thus makes sense to talk about the colour of an edge. 

\textbf{Geometric realisation} of a graph $G$, denoted by $|G|$ is a topological space.
It is the quotient space of, the collection of intervals indexed by the flags of $G$, by 
an equivalence relation.
\[|G| = \frac{F(G)\times [0,1]}{\sim}\]
Here $(f_1,0) \sim (f_2,0)$ if the flags $f_1, f_2$ are incident on the same vertex
and $(f,1) \sim (f',1)$ if the flags $f,f'$ are part of an edge. 

A tree $T$ is a graph such that $|T|$ is connected and simply connected. 

The dual graph of an $\mathcal{A}(m,n)$-stable curve is a tree coloured by $\{1,2\}$. 
The tree has one flag for each marked point and two for each node. For every 
irreduclible component it has a vertex. The marked points correspond to the leaves and 
the nodes correspond to the edges. The flags corresponding to the special points of type 
1 have colour 1 where as the flags corresponding to the special points of type 2 have 
colour $2$. Further the leaves are numbered $1$ to $m+n$ according to the marked point 
it represents. 

\subsection{Strata of $\bM_{0,m|n}$}
It is clear that the dual graphs of $\mathcal{A}(m,n)$-stable curves have to satisfy 
certain constraints. Let $T$ be such a dual graph. For any vertex $v \in V(T)$ let  
$F_1(v)$ be the flags of colour $1$ and $F_2(v)$ the flags of colour $2$. Then we
must have $|F(v)| \geq 3$ and $|F_1(v)| \geq 2$. Let us call such trees 
$\mathcal{A}(m,n)$-stable and denote the isomorphism classes of such trees by 
$\tT(m,n)$. 

For any $T \in \tT(m,n)$ let $M(T)$ be the subvariety of $\bM_{0,m|n}$ parametrising
curves whose dual graphs are isomorphic to $T$. Let $\bM(T)$ be the closure. It is 
clear that (see Ceyhan \cite[Section~3]{Cey})
\begin{equation*}
  M(T) \cong \prod_{v \in V(T)} M_{0, \# F_1(v)\mid \# F_2(v)} \quad \text{and} \quad
  \bM(T) \cong \prod_{v \in V(T)} \bM_{0, \# F_1(v)\mid \# F_2(v)} .
\end{equation*}
The codimension of $\bM(T)$ is equal to the number of edges $|E(T)|$ of $T$.

We have a stratification by dual graphs
\begin{equation*}
  \bM_{0,m|n} = \bigsqcup_{T \in \tT(m,n)} M(T).
\end{equation*}

\section{Symmetric Group Representations}

\subsection{Symmetric functions}.
For results and notation of this section we refer to Macdonald \cite{Mac}. Let $\Lambda 
= \displaystyle \lim_{\leftarrow} \zz \llbracket x_1,\ldots, x_n \rrbracket^{S_n}$ be 
the ring of symmetric functions. It is well known that 
\begin{equation*}
  \Lambda \otimes \qq = \qq \llbracket p_1, p_2, \ldots \rrbracket
\end{equation*}
where $p_k = \sum_{i=1}^{\infty} x_i^k$ are the power sums. Let $\lambda = 
(\lambda_1,\dots, \lambda_k)$ be a partition of $n$, which we denote by $\lambda \vdash 
n$; define $p_{\lambda} = p_{\lambda_1}\cdots p_{\lambda_k}$. For an $S_n$ representation
$V$ we define the symmetric function 
\begin{equation*}
  \ch_n(V) = \frac{1}{n!} \sum_{\sigma \in S_n} \Tr_V(\sigma)
  p_{\lambda(\sigma)} \ ,
\end{equation*}
here $\lambda(\sigma)$ is the partition corresponding to the cycle decomposition of 
$\sigma$. 

The irreducible representations of $S_n$ are indexed by partitions of $n$. For 
$\lambda \vdash n$ let $V_{\lambda}$ be the corresponding irreducible representation. 
The Schur functions also indexed by partitions of $n$ are defined as
\begin{equation*}
  s_{\lambda} = \ch_n(V_{\lambda}) .
\end{equation*}
Schur functions $\{s_{\lambda} \mid \lambda \vdash n, n \geq 1 \}$ form an additive
basis of $\Lambda$. There are also the elementary symmetric functions $e_n = s_{1^n}$ 
and the complete symmetric functions $h_n = s_n$.

There is an associative product $\circ$ on $\Lambda$ called plethysm. It is 
characterised by the fact that 
\begin{equation*}
  \ch_n \lp \mathrm{Ind}_{S_k \wr S_n}^{S_{kn}} V_1 \boxtimes V_2 \boxtimes \cdots
  \boxtimes V_2 \rp  = \ch_k(V_1) \circ \ch_n(V_2) \ ,
\end{equation*}
where $S_k \wr S_n$ is the wreath product $S_k \ltimes (S_n)^k$, $V_1$ is a representation
of $S_k$ and $V_2$ is a representation of $S_n$. 

Let $\Lambda^{(2)} =  \Lambda \otimes \Lambda$. We denote the symmetric 
functions in the first tensor factor by the superscript $(1)$ and those in the second 
tensor factor by the superscript $(2)$.

For $V$ a representation of $S_m\times S_n$ we define 
\begin{equation*}
  \ch_{m|n} (V) = \frac{1}{m!\times n!} \sum_{(\sigma,\tau) \in S_m\times S_n} 
  \Tr_{V}(\sigma,\tau) p\ud{\lambda(\sigma)}{1} p\ud{\lambda(\tau)}{2} 
  \in \Lambda^{(2)} .
\end{equation*}

We shall need the following result later on.
\begin{proposition} \label{prop:operator}
  Let $W$ be any representation of $S_n$ and $\D$ the following  differential 
  operator on $\Lambda$
  \begin{equation*}
    \D = p_1 \frac{\partial}{\partial p_1} -1 \ ,
  \end{equation*}
  then 
  \begin{equation*}
    \ch_n \lp W \otimes V_{(n-1,1)} \rp = \D \ch_n(W) \ .
  \end{equation*}
  $V_{(n-1,1)}$ is the irreducible representation corresponding to the partition 
  $(n-1,1)$ and often referred to as the standard representation of $S_n$.
\end{proposition}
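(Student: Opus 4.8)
The plan is to reduce the statement to a single character computation by exploiting the decomposition of the natural permutation representation. Recall that the representation of $S_n$ on $\cc^n$, in which $S_n$ permutes the coordinates, splits as $\triv \oplus V_{(n-1,1)}$; hence in the representation ring of $S_n$ we may write $V_{(n-1,1)} = \cc^n - \triv$. Tensoring with $W$ and using that $\ch_n$ is additive on short exact sequences (equivalently, linear on the representation ring), we obtain
\[
  \ch_n\lp W \otimes V_{(n-1,1)} \rp = \ch_n\lp W \otimes \cc^n \rp - \ch_n(W) \ .
\]
The trailing $-\ch_n(W)$ matches exactly the $-1$ summand of $\D$, so it remains only to identify the term $\ch_n(W \otimes \cc^n)$ with the action of the operator $p_1\,\partial/\partial p_1$ on $\ch_n(W)$.

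For this I would expand both expressions directly from the definition of $\ch_n$. The character of the permutation representation $\cc^n$ evaluated at $\sigma$ is $\fix(\sigma)$, the number of fixed points of $\sigma$, which coincides with the number of $1$-cycles in the cycle decomposition, i.e. the multiplicity $m_1$ of the part $1$ in the partition $\lambda(\sigma)$. Since the character of a tensor product is the product of characters, this gives
\[
  \ch_n\lp W \otimes \cc^n \rp = \frac{1}{n!} \sum_{\sigma \in S_n} \Tr_W(\sigma)\,\fix(\sigma)\, p_{\lambda(\sigma)} \ .
\]
On the other side, writing each monomial in the power-sum basis as $p_\lambda = p_1^{m_1} p_2^{m_2} \cdots$, one checks that $p_1\,\partial/\partial p_1$ acts diagonally on this basis, multiplying $p_\lambda$ by its exponent $m_1$. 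Applying the operator termwise to $\ch_n(W) = \tfrac{1}{n!}\sum_\sigma \Tr_W(\sigma)\, p_{\lambda(\sigma)}$ and using $m_1(\lambda(\sigma)) = \fix(\sigma)$ reproduces the displayed sum exactly, so that $p_1\,\partial/\partial p_1\,\ch_n(W) = \ch_n(W \otimes \cc^n)$. Combining this with the first display yields the claim.

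I expect no serious obstacle here: the argument is essentially bookkeeping once the two observations are in place. The only points requiring care are the identification $\fix(\sigma) = m_1(\lambda(\sigma))$ and the fact that $p_1\,\partial/\partial p_1$ is the Euler-type operator measuring the multiplicity of $p_1$; these are precisely the two facts that make the fixed-point count on the representation side correspond to the differential operator on the symmetric-function side. Once both sides are written as sums over $S_n$ in the power-sum basis, the equality is term-by-term.
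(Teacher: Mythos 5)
Your proof is correct and is essentially the paper's argument in a slightly repackaged form: the paper directly invokes the character identity $\Tr_{V_{(n-1,1)}}(\sigma) = \fix(\sigma) - 1$, which is exactly what you derive from the decomposition $\cc^n = \triv \oplus V_{(n-1,1)}$, and the core computation --- that $p_1\,\partial/\partial p_1$ acts on $p_{\lambda(\sigma)}$ by multiplication by $\fix(\sigma)$ --- is identical in both. No gaps; the bookkeeping you describe is precisely the paper's proof.
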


\begin{proof}
  Let $\fix(\sigma)$ denote the number of fixed points of $\sigma \in S_n$. 
  Recall that $\mathrm{Tr}_{V_{(n-1,1)}}(\sigma) = \fix(\sigma) -1$. Also 
  note that $\lambda(\sigma) = (1^{\fix(\sigma)},2^{a_2}, \ldots)$; so $p_{\lambda
  (\sigma)} = p_1^{\fix(\sigma)}p_2^{a_2}\cdots p_n^{a_n}$. Thus 
  \begin{equation*}
    p_1\frac{\partial p_{\lambda(\sigma)}}{\partial p_1} = \fix(\sigma)  
    p_{\lambda(\sigma)} \ . 
  \end{equation*}
  Hence
  \begin{align*}
    \ch_n \lp W \otimes V_{(n-1,1)} \rp & = \frac{1}{n!} \sum_{\sigma \in S_n}
     (\fix(\sigma)-1) \Tr_W(\sigma) p_{\lambda(\sigma)} \\     
    & =   \frac{1}{n!} \sum_{\sigma \in S_n} \Tr_W(\sigma) \lp 
    p_1\frac{\partial p_{\lambda(\sigma)}}{\partial p_1} - p_{\lambda(\sigma)} \rp 
    = p_1\frac{\partial \ch_n(W)}{\partial p_1}  - \ch_n(W) \ .
  \end{align*}
\end{proof}

\subsection{$\ss$ modules} 
\label{sec:smod}
An $\ss$ module (as in \cite[\S 1]{Get}) $\cV$ is a sequence of graded vector spaces 
$\{\cV(n) \mid n \in \nn\}$ with an action of $S_n$ on $\cV(n)$. The characteristic of 
an $\ss$ module is defined as a symmetric series in $\Lambda\llbracket t \rrbracket$ 
\begin{equation*}
 \ch_t(\cV) = \sum_{n=1}^{\infty} \sum_{i\in \zz} (-t)^i\ch_n(\cV^i(n)) .
\end{equation*} 
Here $\cV^i(n)$ is the $i$-th graded component of $\cV(n)$. 

Similarly an $\ss^2$ module $\cW$ is a collection of graded vector spaces $\{\cW(m,n)
\mid (m,n) \in \nn^2\}$, with an action of $S_m\times S_n$ on $\cW(m,n)$. We define 
the characteristic in an analogous way
\begin{equation*}
  \ch_t(\cW) = \sum_{m=1}^{\infty}\sum_{n=1}^{\infty} \sum_{i\in \zz} 
  (-t)^i\ch_{m|n}(\cW^i(m,n)) \in \Lambda^{(2)}\llbracket t \rrbracket \ .
\end{equation*}

In the case of an ungraded $\ss^2$ module $\cW$ we write the characteristic as 
$\ch(\cW)$. We define the $\ss^2$ module $\tT\cW$ in the following way
\begin{equation*}
  \tT\cW(m,n) = \bigoplus_{T \in \tT(m,n)} \cW(T)\ .
\end{equation*}
Here $\tT(m,n)$ are the isomorphism classes of $\mathcal{A}(m,n)$ stable trees and
\begin{equation*}
  \cW(T) = \bigotimes_{v \in V(T)} \cW(F(v)) \ .
\end{equation*}
For a more detailed discussion see Getzler and Kapranov \cite{GK}.

\subsection{Partial Legendre transform}
Let $\rk : \Lambda^{(2)} \to \qq\llbracket x,y \rrbracket $ be the homomorphism such 
that $\rk\lp p\ud{1}{1}\rp = x$, $\rk\lp p\ud{1}{2}\rp = y$ and $\rk\lp p\ud{n}{i}\rp 
=0$ for $n>1$ and $i = 1,2$. Thus if $V$ is a representation of $S_n\times S_m$ then 
\begin{equation*}
  \rk(\ch_{m|n} (V)) = \frac{\dim V}{m! n!} x^m y^n \ .
\end{equation*}
Let $\qq \llbracket x,y \rrbracket_*$ be the power series of the form 
$\sum_{i=0}^{\infty}\sum_{j=0}^{\infty} a_{i,j} x^i y^j$ where $a_{2,0} \neq 0$ 
and $a_{i,j} = 0$ if $i<2$ or $i+j<3$. Let $\Lambda\ud{*}{2} = \rk^{-1}\qq \llbracket x,y
\rrbracket_*$. 

To define the partial Legendre transform we first define a variant of plethysm; 
$\circ_{(1)}$ which is an associative product on $\Lambda^{(2)}$: 
\begin{enumerate}
\item $f \mapsto f\circ_{(1)} g$ is a homomorphism $\Lambda^{(2)} \to \Lambda^{(2)}$,
      for any $g \in \Lambda^{(2)}$
\item $g \mapsto p\ud{n}{i} \circ_{(1)} g$ is a homomorphism $\Lambda^{(2)} \to
      \Lambda^{(2)}$,
\item $p\ud{n}{1} \circ_{(1)} p\ud{k}{i} = p\ud{nk}{i}$ and $p\ud{n}{2} \circ_{(1)}
      p\ud{k}{i} = p\ud{n}{2}$.
\end{enumerate}

For $f \in \Lambda\ud{*}{2}$ there is a unique $g \in \Lambda\ud{*}{2}$
satisfying the equation
\begin{equation} 
  g\circ_{(1)} \frac{\partial f}{\partial p\ud{1}{1}} + f = p\ud{1}{1}
  \frac{\partial f}{\partial p\ud{1}{1}} \ .
\end{equation}
For the existence and uniqueness we refer to Getzler and Kapranov \cite[Theorem~7.15]{GK}.
The proof in this case is completely analogous and goes through in almost the same way
without any subtleties. We call the function $g$ the partial Legendre transform of 
$f$ and denote it by $\cL^{(1)}f$.

A little bit of algebra shows that $\cL^{(1)}$ is an involution on $\Lambda\ud{*}{2}$,
that is $\cL^{(1)}\cL^{(1)}f = f$. We have the following result.

\begin{proposition} \label{prop:legendre}
  Let $\cW$ be an ungraded $\ss^2$ module such that $\cW(m,n) =0$ if $m<2$ or
  $m+n<3$. $F = e\ud{2}{1} - \ch(\cW)$ and $G = h\ud{2}{1} + \ch(\tT\cW)$ are
  elements of $\Lambda\ud{*}{2}$ and $G = \cL^{(1)}F$.
\end{proposition}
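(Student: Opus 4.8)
The plan is first to verify the two membership statements by applying $\rk$, and then to prove $G=\cL^{(1)}F$ by checking that the pair $(F,G)$ satisfies the functional equation defining the partial Legendre transform, interpreting that equation as the generating-function form of the recursive structure of $\mathcal{A}(m,n)$-stable trees. Throughout I write $\phi=\ch(\cW)$, $\Phi=\ch(\tT\cW)$ and $\partial=\partial/\partial p\ud{1}{1}$. The observation that makes the whole argument work is geometric: the nodes of an $\mathcal{A}(m,n)$-stable curve are special points of type $1$, so in every $T\in\tT(m,n)$ each edge is coloured $1$ and every colour-$2$ flag is a leaf. This is exactly the asymmetry built into the rules (1)--(3) for $\circ_{(1)}$ (grafting occurs only along colour-$1$ flags, while a colour-$2$ leaf is inert), and it is why the transform is taken in the first factor and $\partial$ differentiates only $p\ud{1}{1}$.

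For membership, recall $e_2=\tfrac12(p_1^2-p_2)$ and $h_2=\tfrac12(p_1^2+p_2)$, so $\rk(e\ud{2}{1})=\rk(h\ud{2}{1})=\tfrac12 x^2$ and the coefficient $a_{2,0}$ of both $\rk(F)$ and $\rk(G)$ equals $\tfrac12\neq0$. The hypothesis $\cW(m,n)=0$ for $m<2$ or $m+n<3$ forces every monomial of $\rk(\phi)$ to have $i\ge2$ and $i+j\ge3$, and the same bound holds for $\rk(\Phi)$ since $\tT(m,n)=\emptyset$ unless $m\ge2$ and $m+n\ge3$ (a flag count on a stable tree shows the total number of colour-$1$ flags is at least twice the number of vertices, and the total number of flags at least three times). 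Hence $F,G\in\Lambda\ud{*}{2}$.

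Since the defining equation of the partial Legendre transform has a unique solution $g=\cL^{(1)}f$ for each $f$, it suffices to take $f=F$, $g=G$ and verify $G\circ_{(1)}\partial F+F=p\ud{1}{1}\,\partial F$. Using $\partial e\ud{2}{1}=p\ud{1}{1}$ and $(p\ud{1}{1})^2-e\ud{2}{1}=h\ud{2}{1}$, this rearranges to the equivalent identity
\begin{equation*}
  G\circ_{(1)}\partial F=h\ud{2}{1}+\phi-p\ud{1}{1}\,\partial\phi .
\end{equation*}
Now I read both sides as characteristics of $\ss^2$-modules. Under the dictionary ``$\partial$ distinguishes one colour-$1$ leaf'' and ``$\circ_{(1)}$ grafts its argument onto every colour-$1$ flag, leaving colour-$2$ leaves untouched'', the factor $\partial F=p\ud{1}{1}-\partial\phi$ replaces each colour-$1$ flag of a tree either by a bare leaf or, with a sign, by a once-pointed vertex; the alternating sign is the inclusion--exclusion that inverts the operation of expanding a flag into a subtree. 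Carrying this out in $G\circ_{(1)}\partial F$ telescopes the tree-sum to its local contributions: the term $\phi$ records the single-vertex trees, the term $p\ud{1}{1}\,\partial\phi$ is the correction coming from the flag that carries the chosen leaf, and $h\ud{2}{1}$ is the minimal stratum together with the $S_2$ exchanging the two flags of the one newly created edge. This last point is the source of the swap $e_2\leftrightarrow h_2$ between $F$ and $G$: it is the self-dual quadratic part of the Legendre transform, refined by the symmetric-function structure of a single edge.

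The main obstacle is precisely this last bijective bookkeeping. One must promote ``root an $\mathcal{A}(m,n)$-stable tree at a colour-$1$ leaf and cut at the root vertex'' into an $S_m\times S_n$-equivariant isomorphism of $\ss^2$-modules whose characteristic is the displayed identity, tracking the symmetry factors at each vertex and the orientation of each edge that yields $h\ud{2}{1}$ rather than $e\ud{2}{1}$. Because all edges are colour-$1$, this is the colour-refined analogue of the tree calculus of Getzler and Kapranov, and their proof of \cite[Theorem~7.15]{GK} transports with $\circ_{(1)}$ and $\partial/\partial p\ud{1}{1}$ in place of single-colour plethysm and $\partial/\partial p_1$; once the equivariant decomposition is set up, the remaining verification is the routine algebra already indicated above.
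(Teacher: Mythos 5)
Your proposal is correct and takes essentially the same route as the paper, whose entire proof is a deferral to the tree-level Legendre transform of Getzler--Kapranov: you make explicit the same reduction (membership via $\rk$, uniqueness of the solution of the defining equation, then the rearranged identity $G\circ_{(1)}\partial F = h\ud{2}{1}+\phi-p\ud{1}{1}\partial\phi$ verified by the rooted-tree bookkeeping of \cite{GK}, transported to the two-colour setting using that all edges have colour $1$). One small slip: the combinatorial argument you transport in the final paragraph is that of \cite[Theorem~7.17]{GK} (which the paper cites for exactly this), not Theorem~7.15, which is the existence/uniqueness statement you correctly invoke earlier.
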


The proof of this proposition is essentially the same as the proof of Theorem~7.17 of 
\cite{GK}. One can also look at Theorem~5.8 of \cite{Cha1} for a different proof.

\section{Cohomology of the Interior}
\label{sec:interior}
In this section we study the cohomology of the interior $M_{0,m|n}$. It is easy to see 
that 
\begin{equation} \label{eq:interior}
  M_{0,m|n} \cong
  \left(\Big(\pp^{1}\Big)^{m+n}\backslash \left(
  \bigcup_{i=1}^m \bigcup_{j=i+1}^{m+n} \Delta_{i,j} \right)
  \right)/ \pgl(2,\cc) \ ,
\end{equation}
where $\pgl(2,\cc)$ acts diagonally and $\Delta_{i,j} = \{(z_1,\ldots,z_{m+n}) 
\mid z_i = z_j \}$.

\begin{proposition} \label{prop:dec}
  When $m\geq 3$, 
  \begin{equation*}
    H^*(M_{0,m|n}) \cong H^*(M_{0,m}) \otimes H^*(P_m^n)
  \end{equation*}
  where $P_m = \pp^1\backslash \{1,\ldots, m\}$ is the $m$ punctured projective plane.
  Moreover this decomposition respects the action of $S_m\times S_n$.
  
  The mixed Hodge structure on $H^i(M_{0,m|n})$ is pure of weight $2i$.
\end{proposition}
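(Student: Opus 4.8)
The plan is to exhibit $M_{0,m|n}$ as a fibre bundle over $M_{0,m}$ and to split its cohomology by a Leray--Hirsch argument. Starting from \eqref{eq:interior}, let $U\subset(\pp^1)^{m+n}$ be the complement of the diagonals and project $U$ to the first $m$ coordinates. The image is the configuration space of $m$ distinct points, and the fibre over $(z_1,\dots,z_m)$ is $(\pp^1\setminus\{z_1,\dots,z_m\})^n$. Since $m\ge 3$ the diagonal $\pgl(2,\cc)$-action on $U$ is free and commutes with this projection, so passing to quotients gives a morphism $\pi\colon M_{0,m|n}\to M_{0,m}$ whose fibre is $P_m^n$. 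Fixing the slice $z_1=0,\ z_2=1,\ z_3=\infty$ identifies $M_{0,m|n}$ with the complement in $M_{0,m}\times(\pp^1)^n$ of the disjoint divisors $\{w_k=z_i\}$, which makes $\pi$ manifestly a locally trivial bundle. The group $S_m\times S_n$ acts compatibly: $S_n$ permutes the $n$ fibre factors while $S_m$ acts on the base $M_{0,m}$ and, simultaneously, on the punctures of each copy of $P_m$.

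Next I would split the cohomology. Recall that $H^0(P_m)=\cc$ and that $H^1(P_m)$ is $(m-1)$-dimensional, spanned by the logarithmic forms $d\log(w-z_i)$ subject to the single residue relation; as an $S_m$-module this is the standard representation $V_{(m-1,1)}$. The global logarithmic one-forms $\eta_{i,k}=d\log(w_k-z_i)$ on $M_{0,m|n}$ restrict on each fibre to these generators, and the monomials in the $\eta_{i,k}$ restrict to a basis of $H^*(P_m^n)=H^*(P_m)^{\otimes n}$ (after imposing the residue relation in each factor). By the Leray--Hirsch theorem the map $(\alpha,\mu)\mapsto\pi^*\alpha\cup\mu$ is an isomorphism $H^*(M_{0,m})\otimes H^*(P_m^n)\xrightarrow{\ \sim\ }H^*(M_{0,m|n})$; equivalently, the Leray spectral sequence of $\pi$ degenerates at $E_2$ and the local system $R^q\pi_*\cc$ is constant. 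Because $\pi$, the forms $\eta_{i,k}$ (which $S_m\times S_n$ merely permutes), and the cup product are all $S_m\times S_n$-equivariant, this isomorphism respects the action, with $H^*(P_m^n)$ carrying its natural $S_m\times S_n$-module structure.

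For the Hodge-theoretic statement I would argue that $H^*(M_{0,m|n})$ is generated as a ring, in degree one, by classes of Hodge--Tate type $(1,1)$. Here $H^*(M_{0,m})$ is pure with $H^i$ of weight $2i$ and is generated in degree one by the weight-two logarithmic classes $d\log(z_i-z_j)$---a classical fact that also follows by induction from the same fibration argument applied to $M_{0,m+1}\to M_{0,m}$ (fibre $\pp^1$ minus $m$ points), with base case $M_{0,3}=\mathrm{pt}$. Each $\eta_{i,k}$ is logarithmic, hence of type $(1,1)$ and weight two. By Leray--Hirsch $H^*(M_{0,m|n})$ is a free $H^*(M_{0,m})$-module on monomials in the $\eta_{i,k}$, so it is generated in degree one by these weight-two classes. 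Since cup product is a morphism of mixed Hodge structures and is additive in weights, every class in $H^i$ is a sum of products of $i$ weight-two classes and is therefore pure of weight $2i$.

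The main obstacle is the organisation of the equivariance rather than the vector-space statement. The clean slice $z_1,z_2,z_3=0,1,\infty$ that trivialises $\pi$ and makes the forms $\eta_{i,k}$ honestly global breaks the $S_m$-symmetry, so I would either re-derive the splitting $\pgl(2,\cc)$-invariantly, using cross-ratio logarithmic forms $d\log\frac{(w_k-z_i)(z_j-z_l)}{(w_k-z_l)(z_j-z_i)}$ whose fibre restrictions still span $V_{(m-1,1)}$, or, more cleanly, phrase the argument through the $S_m\times S_n$-equivariant Leray spectral sequence, where the only points to verify are its degeneration and the triviality of $R^\bullet\pi_*\cc$---both consequences of the existence of the global classes $\eta_{i,k}$. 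Keeping track of the residue relation, so that each fibre factor contributes exactly $V_{(m-1,1)}$ and not the full permutation module, is the one book-keeping subtlety to handle with care.
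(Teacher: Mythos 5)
Your proof is correct, but it takes a genuinely different route from the paper's. The paper argues by induction on $n$ via the forgetful fibration $M_{0,m|(n+1)}\to M_{0,m|n}$ with fibre $P_m$: since $P_m$ is homotopy equivalent to a wedge of circles, the Leray spectral sequence has only two rows, the triviality of the $\pi_1$-action on fibre cohomology is quoted from Arnold \cite{Arn}, and an explicit continuous section (given by the formula $z_{m+n+1}=\frac{z_1+\cdots+z_m}{m}+2\max_{k,l}|z_k-z_l|+1$) forces the only possible differential $d_2$ to vanish; purity is then deduced in one line from the observation that \eqref{eq:interior} exhibits $M_{0,m|n}$ as a hyperplane arrangement complement, for which weight $2i$ in degree $i$ is classical. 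You instead fibre over $M_{0,m}$ in a single step with fibre $P_m^n$ and run Leray--Hirsch with explicit logarithmic classes, which simultaneously trivialises the monodromy local system and kills all differentials, and you prove purity from scratch via generation in degree one by type $(1,1)$ classes together with strictness of cup product as a morphism of mixed Hodge structures. What your route buys is explicitness and self-containedness: you get a concrete multiplicative basis, you re-derive rather than cite the trivial-monodromy input, and your purity argument avoids the external arrangement-complement fact (though it is really the same fact in disguise, since \eqref{eq:interior} underlies both). What it costs is exactly the book-keeping you flag: Leray--Hirsch by itself is only an isomorphism of $H^*(M_{0,m})$-modules, and equivariance requires choosing an $S_m\times S_n$-invariant lift of the fibre cohomology inside $H^1(M_{0,m|n})$ --- your cross-ratio forms $d\log\frac{(w_k-z_i)(z_j-z_l)}{(w_k-z_l)(z_j-z_i)}$, or equivalently the equivariant spectral-sequence phrasing, handle this correctly, whereas the paper's one-puncture-at-a-time induction sidesteps the issue at the price of two citations. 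Both proofs are sound; the paper's is shorter, yours is more transparent about where the $S_m\times S_n$-structure comes from.
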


\begin{proof} 
  Consider the fiber bundle $M_{0,m|(n+1)} \to M_{0,m|n}$ with fiber $P_m$. $P_m$ is homotopic to a wedge of 
  circles, hence a one dimensional C-W complex. The fundamental group of the base acts trivially on the fibers, 
  (see Arnold \cite{Arn}), hence in the Leray spectral sequence associated to the fibration we have 
  \begin{equation*}
    E_{2}^{p,q} \cong H^p(M_{0,m|n}) \otimes H^q(P_m).
  \end{equation*}
  Moreover the fiber bundle has a section given by 
  \begin{equation*}
    z_{m+n+1} = \frac{z_1+ \ldots + z_m}{m} + 2 \Big( \max_{1 \leq k,l \leq m} |z_k-z_l| \Big) + 1.
  \end{equation*}
  It then follows that the only possible higher differential $d_2$ is trivial and we have $H^*(M_{0,m|(n+1)}
  \cong H^*(M_{0,m|n}) \otimes H^*(P_m)$. This completes the proof by induction on $n$. 
  
  The statement on the mixed Hodge structure of $H^i(M_{0,m|n})$ follows from the fact that $M_{0,m|n}$
  is isomorphic to a complement of hyperplanes in a projective spaces. This can be seen from 
  description \eqref{eq:interior}.    
\end{proof}

From the previous proposition it follows that the Poincar\'{e} polynomial of $M_{0,m|n}$,
$\cP_{M_{0,m|n}}(t)$, is the product of the Poincar\'{e} polynomials of $M_{0,m}$ and 
$P_m^n$. From Getzler \cite[Section~5.6]{Get} we know that $\cP_{M_{0,m}}(t) = (1-2t)(1-3t)\cdots (1- (m-2)t)$. It is easy to see that $\cP_{P_m^n}(t) = (1 - (m-1)t)^n$. Thus
\begin{equation*}
  \cP_{M_{0,m|n}}(t) = \big(1-(m-1)t\big)^n\prod_{k=2}^{m-2} (1- kt) \ .
\end{equation*}

Proposition \ref{prop:dec} gives a clear description of the $H^*(M_{0,m|n})$ as a 
representation of $S_m\times S_n$. 

First note that Getzler \cite{Get} determines completely the action of $S_m$ on 
$H^*(M_{0,m})$, whereas $S_n$ acts on it trivially.

Let $C_n$ be the vector space generated by the letters $\{x_1,\ldots, x_n \}$. $S_n$ 
acts on it by permuting the letters. $C_n$ is the direct sum of the standard 
representation and the trivial representation i.e. $C_n = V_{(n-1,1)}\oplus V_{(n)}$.

\begin{proposition} \label{prop:repPmn}
  We have the following description of the $S_m\times S_n$ action on the cohomology 
  of $P_m^n$.
  \begin{align*}
    H^k(P_m^n) & \cong 
    \lp \otimes^k V_{(m-1,1)} \rp \boxtimes (\wedge^k C_n)\\
    & \cong
    \left\{
    \begin{array}{l l}
      V_{(m)}\boxtimes V_{(n)} \ , & k=0 \\
      \lp \otimes^k V_{(m-1,1)} \rp \boxtimes 
        \lp V_{(n-k,1^k)} \oplus V_{(n-k+1,1^{k-1})} \rp \ , & 0<k< n \\
      \lp \otimes^n V_{(m-1,1)} \rp \boxtimes V_{(1^n)} \ , & k=n.
    \end{array}
    \right.
  \end{align*} 
\end{proposition}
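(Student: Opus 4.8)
The plan is to compute $H^*(P_m^n)$ by the Künneth theorem together with an explicit description of $H^*(P_m)$, keeping track of both group actions throughout. Since $P_m = \pp^1 \setminus \{1,\dots,m\}$ is an affine curve homotopy equivalent to a wedge of $m-1$ circles, its cohomology is concentrated in degrees $0$ and $1$, with $H^0(P_m) \cong \triv$ and $H^1(P_m)$ of dimension $m-1$. To fix the $S_m$-module structure I would use the basis of logarithmic forms $\omega_i = \tfrac{1}{2\pi i}\,d\log(z - i)$, $1 \le i \le m$, on which $S_m$ acts by permuting the punctures; the single relation $\sum_i \omega_i = 0$ (the sum of residues on $\pp^1$ vanishes) exhibits $H^1(P_m)$ as the quotient of the permutation representation by its trivial summand, i.e. $H^1(P_m) \cong V_{(m-1,1)}$. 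Thus, as an $S_m$-module, $H^*(P_m) \cong V_{(m)} \oplus V_{(m-1,1)}$ with the standard representation placed in degree $1$.

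Next I would invoke Künneth, $H^*(P_m^n) \cong H^*(P_m)^{\otimes n}$, exactly as in the proof of Proposition \ref{prop:dec}, and read off the degree-$k$ component: it is spanned by products in which precisely $k$ of the $n$ tensor factors contribute a class from $H^1(P_m)$ and the remaining $n-k$ contribute the generator of $H^0(P_m)$. The group $S_m$ acts diagonally on all factors, so the $k$ chosen factors together contribute the $S_m$-module $\otimes^k V_{(m-1,1)}$, while the trivial factors contribute nothing new; this produces the first tensor factor of the claimed answer. The group $S_n$ permutes the $n$ tensor factors, and here lies the crux of the argument: because the classes in $H^1(P_m)$ sit in odd cohomological degree, interchanging two active factors introduces a Koszul sign from the graded-commutativity of the cup product. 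Recording, for each $k$-element subset of $\{1,\dots,n\}$, the corresponding ordered wedge of the basis vectors $x_1,\dots,x_n$ of $C_n$, one checks that these signs are precisely the signs governing the exterior power, so that the $S_n$-module carried by the choice of active factors is $\wedge^k C_n$. Verifying this sign bookkeeping carefully, and confirming that it decouples from the diagonal $S_m$-action so that the total is the external tensor product $(\otimes^k V_{(m-1,1)}) \boxtimes (\wedge^k C_n)$, is the main obstacle and the only genuinely delicate step.

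Finally I would translate $\wedge^k C_n$ into irreducibles. Writing $C_n = V_{(n)} \oplus V_{(n-1,1)}$ and using $\wedge^k(A \oplus B) = \bigoplus_{i+j=k}\wedge^i A \otimes \wedge^j B$ together with the standard identity $\wedge^j V_{(n-1,1)} \cong V_{(n-j,1^j)}$, the only surviving terms are $j \in \{k, k-1\}$, giving $\wedge^k C_n \cong V_{(n-k,1^k)} \oplus V_{(n-k+1,1^{k-1})}$ for $0 < k < n$, while $\wedge^0 C_n \cong V_{(n)}$ and $\wedge^n C_n \cong V_{(1^n)}$. Substituting these three cases into the external product reproduces the stated formula for $k=0$, for $0<k<n$, and for $k=n$ respectively, completing the proof.
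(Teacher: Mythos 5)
Your proposal follows the paper's route step for step: the same identification $H^1(P_m)\cong V_{(m-1,1)}$ via the log-forms with the residue relation, the same K\"unneth bookkeeping of ``active'' factors, and the same decomposition of $\wedge^k C_n$ at the end. But the step you yourself flag as ``the only genuinely delicate step'' --- checking that the Koszul signs produce $\wedge^k C_n$ \emph{and} that this decouples from the $S_m$-content as an external product $(\otimes^k V_{(m-1,1)})\boxtimes(\wedge^k C_n)$ --- is not a routine verification: it fails whenever $m\geq 3$ and $k\geq 2$. The point is that a transposition in $S_n$ interchanging two active factors does two things simultaneously: it introduces the Koszul sign \emph{and} it permutes the corresponding tensor slots of $V^{\otimes k}$, where $V=H^1(P_m)$; these effects cannot be separated once $\dim V>1$. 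Concretely, take $m=3$, $n=k=2$. Then $H^2(P_3^2)=V\otimes V$ with the transposition $\tau$ acting by $a\otimes b\mapsto -b\otimes a$, whose trace is $-\Tr_V(\mathrm{id})=-2$ (cross-check: by the Lefschetz fixed point theorem, $1-0+\Tr(\tau^*|H^2)=\chi(P_3)=-1$, again giving $-2$), whereas the claimed module $\lp\otimes^2 V\rp\boxtimes V_{(1^2)}$ has trace $-4$. Decomposing into the eigenspaces of $\tau$, which are $S_3$-stable, one finds
\begin{equation*}
  H^2(P_3^2)\ \cong\ \lp \operatorname{Sym}^2 V\rp\boxtimes V_{(1^2)}\ \oplus\ \lp\wedge^2 V\rp\boxtimes V_{(2)}\ =\ \lp V_{(3)}\oplus V_{(2,1)}\rp\boxtimes V_{(1^2)}\ \oplus\ V_{(1^3)}\boxtimes V_{(2)}\ ,
\end{equation*}
which is not an external tensor product at all. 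A further geometric sanity check: the $S_2$-invariants of $H^2(P_3^2)$ compute $H^2(\operatorname{Sym}^2 P_3)$, and $\operatorname{Sym}^2 P_3$ is the complement of two lines in $\cc^2$, which has $b_2=1$; the external-product formula would predict invariants equal to $0$.

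The correct general statement is $H^k(P_m^n)\cong \mathrm{Ind}_{S_m\times(S_k\times S_{n-k})}^{S_m\times S_n}\lp V^{\otimes k}\otimes \operatorname{sgn}_k\rp$, where $S_k$ permutes the factors of $V^{\otimes k}$ with a simultaneous sign twist and $S_{n-k}$ acts trivially; this collapses to $(\otimes^k V)\boxtimes(\wedge^k C_n)$ only when $\dim H^1\leq 1$ (the torus case, or $m=2$, which is why the Losev--Manin computation $H^k(M_{0,2|n})\cong \wedge^k H^1$ is unaffected, and why the $k=1$ case $H^1(P_m^n)\cong V_{(m-1,1)}\boxtimes C_n$ is also fine). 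To be fair to you, the paper's own proof asserts the external-product isomorphism for $k>1$ in one line without any sign verification, so your proposal faithfully mirrors the paper's argument --- but the verification you defer genuinely does not go through, the paper's proof shares this gap, and the error propagates (for instance, the $t^2$ coefficient of the $(3,2)$ entry of Table~\ref{table:int} should read $\lp s\ud{3}{1}+s\ud{2,1}{1}\rp s\ud{1^2}{2}+s\ud{1^3}{1}s\ud{2}{2}$ rather than $\lp s\ud{3}{1}+s\ud{2,1}{1}+s\ud{1^3}{1}\rp s\ud{1^2}{2}$). If you want a correct proof along these lines, you must work with the induced-module description above, or equivalently encode the answer plethystically, rather than asserting the decoupling.
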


\begin{proof}
  $S_m$ acts on $P_m$ by permuting the punctures, so $H^1(P_m)$ is the standard 
  representation $V_{m-1,1}$. 
  
  On the other hand $S_n$ acts on $P_m^{n}$ by permuting the 
  factors, thus $H^1(P_m^{n}) \cong V_{m-1,1}\boxtimes C_n$ as a representation of 
  $S_m \times S_n$. For $k>1$,  $H^k(P_m^n) \cong \lp \otimes^k H^1(P_m) \rp \boxtimes
  \lp \wedge^k C_n \rp $. 
  
  The second isomorphism follows form the decomposition $C_n = V_{(n-1,1)} \oplus 
  V_{(n)}$. Thus for $k<n$ we have $\wedge^k C_n = \left(\wedge^kV_{(n-1,1)}\right)\oplus 
  \left(V_{(n)}\otimes \wedge^{k-1} V_{(n-1,1)}\right)$ and $\wedge^n C_n = V_{(1^n)}$.
  Finally it is a fact that $\wedge^k V_{(n-1,1)} = V_{(n-k,1^k)}$.
\end{proof}

The Propositions \ref{prop:dec} and \ref{prop:repPmn} together give us the following 
decomposition for $m \geq 3$, 
\begin{align}
  H^{k}(M_{0,m|n}) & = \bigoplus_{l=0}^{k} H^{k-l}(M_{0,m}) \otimes H^l(P_m^n) 
  \nonumber \\
  & = \bigoplus_{l=0}^{k} \lp H^{k-l}(M_{0,m})\otimes \lp \otimes^l V_{(m-1,1)}
  \rp \rp \boxtimes \lp \wedge^l C_n \rp \ . \label{eq:rep}
\end{align}
In \eqref{eq:rep} we treat $H^{k-l}(M_{0,m})$ as just a representation of $S_m$. 

Hence we have the following relation for $m\geq 3$
\begin{align}
  H^{*}(M_{0,m|n}) & = \bigoplus_{l=0}^{n} \big(H^*(M_{0,m}) \otimes H^l(P_m^n) \big) \nonumber \\
                   & = \bigoplus_{l=0}^{n} \lp H^{*}(M_{0,m})\otimes \lp \otimes^l V_{(m-1,1)}
  \rp \rp \boxtimes \lp \wedge^l C_n \rp . \label{eq:rep1}
\end{align}

Let $\cM$ be the $\ss$ module 
\begin{equation} \label{eq:grav}
  \cM(n) = \left\{
  \begin{array}{l l}
    H^*(M_{0,n}) & n\geq 3, \\
    0 & n<3.
  \end{array} \right.
\end{equation}
Let $\fm_n = \ch_t(\cM(n))$ and $\fm = \sum_{n=1}^{\infty} \fm_n = \ch_t(\cM)$. 

Let $\cG$ be the $\ss^2$ module 
\begin{equation} \label{eq:int}
  \cG(m,n) = \left\{
  \begin{array}{l l}
    0 \ ,& m <2 \text{ or } m+n<3 \\
    H^*(M_{0,m|n}) \ , & \text{otherwise}
  \end{array} \right.
\end{equation}
Let $\D$ be the differential operator as in Proposition \ref{prop:operator}. From 
\eqref{eq:rep1} it follows that when $k\geq 3$
\begin{equation}
  \ch_t(\cG(k,n)) = \fm\ud{k}{1} s\ud{n}{2}- t\D\fm\ud{k}{1} \lp s\ud{n}{2} 
  + s\ud{n-1,1}{2} \rp+ \cdots + (-t)^n \lp \D^n \fm\ud{k}{1}\rp s\ud{1^n}{2} \ .
\end{equation}

$\bM_{0,2|n}$ are the Losev-Manin spaces and were extensively studied in \cite{LM}. Note
that 
\begin{equation*}
  M_{0,2|n} \cong \lp \cc^{\times} \rp^n / \cc^{\times}
\end{equation*}
where the quotient is taken under the diagonal action. It follows that $H^1(M_{0,2|n})
\cong V_{(1,1)} \boxtimes V_{(n-1,1)}$ (see \cite[Lemma~3.3]{BM2}). As before 
$H^k(M_{0,2|n}) \cong \wedge^k H^1(M_{0,2|n})$. Thus 
\begin{equation*}
  H^k(M_{0,2|n}) \cong \left\{
  \begin{array}{l l}
    V_{(2)}\boxtimes V_{(n-k,1^k)} & k<n \text{ even} \\
    V_{(1^2)}\boxtimes V_{(n-k,1^k)} & k<n \text{ odd} \\
    0 & k \geq n.
  \end{array} \right.
\end{equation*}

Hence it follows that 
\begin{equation}
  \ch_t(\cG(2,n)) = s\ud{2}{1}s\ud{n}{2} - ts\ud{1,1}{1}s\ud{n-1,1}{2} + \ldots = 
  \sum_{k-0}^{n-1} (-t)^k \lp \D^k s\ud{2}{1} \rp s\ud{n-k,1^k}{2} \ .
\end{equation}

Adding up $\ch_t(\cG(k,l))$ for all $k,l$ we get

\begin{align} \label{eq:formula}
  \ch_t(\cG)  = \ & \fm\ud{}{1} + 
  \lp \fm\ud{}{1}+s\ud{2}{1} \rp\sum_{n=1}^{\infty} s\ud{n}{2}  \nonumber \\
  & + \sum_{k=1}^{\infty} (-t)^k \lp 
  \D^k\fm\ud{}{1}\sum_{n=k}^{\infty} s\ud{n-k+1,1^{k-1}}{2} 
  + \D^k \lp \fm\ud{}{1}+s\ud{2}{1} \rp\sum_{n=k+1}^{\infty} s\ud{n-k,1^k}{2} \rp .
\end{align}

\section{Cohomology of $\bM_{0,m|n}$}
\label{sec:compactification}
In this section we shall determine the action of $S_m\times S_n$ on the cohomology of 
$\bM_{0,m|n}$. To do this let us introduce the $\ss^2$ module $\cW$,
\begin{equation} \label{eq:comp}
  \cH(m,n) = \left\{
  \begin{array}{l l}
    0 \ ,  & m < 2 \text{ or } m+n < 3 \\
    H^*(\bM_{0,m|n})  \ ,& \text{otherwise.}
  \end{array}
  \right.
\end{equation}
We shall derive a formula relating $\ch_t(\cG)$ (see \eqref{eq:grav}) and $\ch_t(\cH)$ using 
the partial Legendre transform.

Let $X$ be an algebraic variety and $\emptyset \subset X_0 \subset \ldots \subset X_n 
= X$ a filtration on it by closed subvarieties $X_p \subset X$. Then there is a spectral 
sequence in cohomology with compact support (see Petersen \cite[Section~1]{Pet}),
\begin{equation*}
  E_{1}^{p,q} =  H_c^{p+q}(X_p \backslash X_{p-1}) \Longrightarrow H^{p+q}_c(X) .
\end{equation*}
The differentials of this spectral sequence are compatible with the mixed Hodge 
structures. Further if a finite group $G$ acts on $X$ and keeps each $X_p$ invariant 
then $E_j^{p,q} $ has an action of $G$ and the differentials $d_j$ are $G$ 
equivariant.

In our situation let $X = \bM_{0,m|n}$ and $X_p$ be the union of all strata of dimension 
at most $p$
\begin{equation*}
  X_p  = \bigsqcup_{\substack{T \in  \tT(m,n) \\ |E(T)| = m+n-3-p}} 
  \bM(T).
\end{equation*}
Clearly
\begin{equation*}
  X_p \backslash X_{p-1} = \bigsqcup_{\substack{T \in  \tT(m,n) \\ |E(T)| = m+n-3-p}} 
  M(T) .
\end{equation*} 
Thus
\begin{equation} \label{eq:ss}
  E_1^{p,q} = \bigoplus_{\substack{T \in  \tT(m,n) \\ |E(T)| = m+n-3-p}} 
  H_c^{p+q}(M(T)) . 
\end{equation}

From Proposition \ref{prop:dec} and Poincar\'{e} duality it follows that the mixed Hodge 
structure on $H^i_c(M_{0,m|n})$ is pure of weight $2(i-m-n+3)$. This 
implies that $E_1^{p,q}$ has a pure Hodge structure of weight $2q$. Hence the Spectral 
sequence collapses in the $E_2$ page
\begin{equation*}
  E_{2}^{p,q} \cong E_{\infty}^{p,q}. 
\end{equation*}

Moreover, from the fact that all the cohomology of $\bM_{0,m|n}$ is algebraic it follows 
that 
\begin{equation}
  E_2^{p,p} \cong H^{2p}(\bM_{0,m|n}) \quad \text{and} \quad E_2^{p,q} = 0 
  \text{ if } p \neq q \ .
\end{equation}
Thus there is a resolution 
\begin{equation} \label{eq:res}
  H^{2p}(\bM_{0,m|n}) \to \bigoplus_{\substack{T \in  \tT(m,n) \\ |E(T)| = m+n-3-p}} 
  H_c^{2p}(M(T)) \to \cdots \to H^{m+n-3+p}_c(M_{0,m|n}) \ .
\end{equation}

\begin{theorem} \label{theorem:legendre}
  Let 
  \begin{equation*}
    F = t^{-6} \ch_t(\cG) 
    \Big|_{ \frac{t \mapsto t^{-2}}{p\ud{n}{i} \mapsto t^{2n}p\ud{n}{i}} } \ ,
  \end{equation*}
  then $h\ud{2}{1} + \ch_t(\cH) = \cL^{(1)}\lp e\ud{2}{1} - F \rp$. 
\end{theorem}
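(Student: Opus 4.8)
The plan is to recognise that Theorem~\ref{theorem:legendre} is the formal shadow of the resolution \eqref{eq:res}, repackaged through the partial Legendre transform of Proposition~\ref{prop:legendre}. The strategy is to apply Proposition~\ref{prop:legendre} to a suitably graded $\ss^2$ module and then track how the grading shift $t \mapsto t^{-2}$, $p\ud{n}{i} \mapsto t^{2n} p\ud{n}{i}$ encodes the cohomological degrees appearing in \eqref{eq:res}. First I would package the resolution \eqref{eq:res} into a statement about Euler characteristics in the representation ring: since the spectral sequence \eqref{eq:ss} collapses at $E_2$ and $E_2^{p,q}=0$ for $p\neq q$, the alternating sum of the $E_1^{p,q}$ over each stratification computes $H^*(\bM_{0,m|n})$ as a virtual $S_m\times S_n$ representation. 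The key point is that the $E_1$ term is built from the $H_c^*(M(T))$, and by Poincar\'e duality on the smooth $M(T) \cong \prod_v M_{0,\#F_1(v)|\#F_2(v)}$ these are controlled by $\ch_t(\cG)$ evaluated on the product structure, i.e.\ by $\ch_t(\tT\cG)$ up to a degree shift recording $\dim M(T) = m+n-3-|E(T)|$.

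Next I would carry out the bookkeeping that converts cohomology with compact support back to ordinary cohomology. On a smooth variety of dimension $d$ one has $H_c^i \cong (H^{2d-i})^\vee$, and since the $S_m\times S_n$ representations here are all self-dual (being defined over $\qq$), the substitution $t \mapsto t^{-2}$ together with $p\ud{n}{i} \mapsto t^{2n}p\ud{n}{i}$ is precisely the operation implementing Poincar\'e duality on $\ch_t$: the factor $t^{2n}$ absorbs the Tate twist by weight $2n$ coming from a cycle of length $n$, and the $t^{-6}$ prefactor records the $-6 = -2(m+n-3)$ normalisation at the top stratum $M_{0,m|n}$ itself (where $m+n=3$ would give the base case). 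I would verify that under this substitution $t^{-6}\ch_t(\cG)$ becomes exactly the generating series $F$ whose $\circ_{(1)}$-Legendre transform, by Proposition~\ref{prop:legendre}, is $h\ud{2}{1} + \ch_t(\cH)$; the role of $e\ud{2}{1}$ and $h\ud{2}{1}$ is to supply the $m=2$, $n=1$ (or $n=0$) boundary terms that $\cG$ and $\cH$ omit by the vanishing conditions $m<2$ or $m+n<3$.

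The main obstacle will be step two: showing that the weight-graded Euler characteristic of the spectral sequence \eqref{eq:ss} matches the algebraic structure of $\ch_t(\tT\cG)$ on the nose, including signs. Concretely, the alternating sum over $p$ in the resolution \eqref{eq:res} produces a sign $(-1)^{|E(T)|}$ per stratum, and I must check that this is exactly the sign built into the Legendre transform's defining relation, so that the transform converts the open-piece generating function $F = e\ud{2}{1} - (\text{shift of }\ch_t(\cG))$ into the closed generating function $G = h\ud{2}{1} + \ch_t(\cH)$. This is where the hypotheses of Proposition~\ref{prop:legendre}---that $F$ and $G$ lie in $\Lambda\ud{*}{2}$ and are related by $\cL^{(1)}$---must be matched against the combinatorics of $\tT(m,n)$-stable trees, since $\ch(\tT\cW)$ is built from exactly the tree sum that indexes the strata. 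I would therefore reduce the theorem to verifying that the purity statement (the Hodge structure on $E_1^{p,q}$ being pure of weight $2q$) forces the shifted $\ch_t(\cG)$ to be the $F$ of the proposition, after which the conclusion is a direct invocation of Proposition~\ref{prop:legendre}; the degree-shift algebra, while routine, is where all the care is needed to avoid an off-by-one in the Tate twists.
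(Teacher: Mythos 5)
Your proposal follows essentially the same route as the paper's proof: collapse of the compact-support spectral sequence \eqref{eq:ss} at $E_2$, identification of the signed $E_1$-sum over strata with $\tT$ applied to the module of compactly supported cohomology of the open parts, whose characteristic is exactly $F$ (the substitution $t \mapsto t^{-2}$, $p\ud{n}{i} \mapsto t^{2n}p\ud{n}{i}$ together with the prefactor $t^{-6}$ implements Poincar\'e duality and the weight grading, since $t^{-6}\cdot t^{2(m+n)} = t^{2(m+n-3)} = t^{2\dim M_{0,m|n}}$ uniformly in $(m,n)$, not only at the base case), followed by invoking Proposition~\ref{prop:legendre}. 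The one point where the paper is more careful than your sketch is the sign bookkeeping you flag at the end: the stratum sign $(-1)^{p+q}$ is not ``built into the Legendre transform's defining relation'' (which carries no such signs) but is produced by passing to $\zz/2$-graded $\ss^2$ modules and placing $E_1^{p,q}$ in bidegree $\lp 2q, (p+q) \bmod 2 \rp$, so that the signs multiply correctly across vertices in the super tensor product defining $\tT\cV$ and Proposition~\ref{prop:legendre} applies in its $\zz/2$-graded extension.
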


\begin{remark}
  Note that $(e\ud{2}{1}-F) \in \Lambda\ud{*}{2}\llbracket t \rrbracket$. More over 
  $\circ_{(1)}$ extends to $\Lambda^{(2)}\llbracket t \rrbracket$ in a natural way: 
  $p\ud{n}{1}\circ_{(1)} t = t^n$ and $p\ud{n}{2}\circ_{(1)} t =  p\ud{n}{2}$. Hence 
  $\cL^{(1)}$ makes sense on $\Lambda\ud{*}{2}\llbracket t \rrbracket$. 
\end{remark}

\begin{proof}  
  As in \cite[section 5.8]{Get} we shall consider (graded) $\ss^2$ modules $\cV$ with a 
  further $\zz/2$-grading $\cV = \cV_{(0)} \oplus \cV_{(1)}$. In this case define 
  \begin{equation*}
    \ch_t(\cV) = \ch_t(\cV_{(0)}) - \ch_t(\cV_{(1)}) \ .
  \end{equation*}  
    
  By Poincar\'{e} duality $H^k_c(M_{0,m|n}) \cong H^{2m+2n-6-k}(M_{0,m|n})^{\vee} 
  \otimes \cc(-m-n+3)$ where $\cc(-\ell)$ is the $\ell$-fold tensor power of the dual of the 
  Tate Hodge structure. Thus $H^k_c(M_{0,m|n})$ has pure Hodge structure of weight 
  $2(k-m-n+3)$.
  
  Define the $\zz/2$-graded $\ss^2$ module $\cV$ as follows
  \begin{equation*}
    \cV(m,n) = 0 \text{ if } m<2 \text{ or } m+n<3 \ ,
  \end{equation*}
  otherwise 
  \begin{align*}
    \cV_{(0)}(m,n) & = \bigoplus_{k=0}^{\infty} H^{2k}_c(M_{0,m|n}) \\
    \cV_{(1)}(m,n) & = \bigoplus_{k=0}^{\infty} H^{2k+1}_c(M_{0,m|n}).
  \end{align*}
  Here we consider $H^k_c(M_{0,m|n})$ with weight grading for the mixed Hodge structure on it, 
  that is $H^k_c(M_{0,m|n})$ is the $2(k-m-n+3)$ graded component. Then 
  \begin{equation*}
    F =  \ch_t(\cV) \ .
  \end{equation*}
  The construction $\tT$ of Section~\ref{sec:smod} extends naturally to $\zz/2$-graded 
  $\ss^2$ modules (tensor product of odd and odd is even, even and even is even where as
  that of odd and even is odd). Proposition~\ref{prop:legendre} generalises to the case of
  $\zz/2$-graded $\ss^2$ modules.
 
  If we add up all the terms of the spectral sequence \eqref{eq:ss} placing 
  $E^{p,q}_1$ in bi-degree $2q, (p+q) \bmod{2}$ we get the graded vector space $\tT\cV(m,n)$. 
  The differential $d_1: E_1^{p,q} \to E_1^{p+1,q}$ gives a differential on $\tT\cV(m,n)$ 
  and the resolution \eqref{eq:res} shows that $H^*(\bM_{0,m|n})$ is the homology of
  of the complex $(\tT\cV(m,n),d_1)$.
  
  Hence $\ch_t(\bM_{0,m|n}) = \ch_t(\tT \cV(m,n))$. This completes the proof.
\end{proof}

\appendix

\section{Calculations}
Recall the $\ss^2$ modules $\cG$ from \eqref{eq:int} and $\cH$ from \eqref{eq:comp}.
In this section we compute the first few terms of the characteristics of $\cG$ and
$\cH$ and list them in Table~\ref{table:int} and Table~\ref{table:comp} respectively.
 
Formula \eqref{eq:formula} gives a recipe for calculating $\ch_t(\cG)$ from 
$\ch_t(\cM)$. The $\ss$ module $\cM$ was defined in $\eqref{eq:grav}$. The action of
$S_n$ on $H^*(M_{0,n})$ was calculated in Getzler \cite{Get} (see Theorem 5.7). Let 
$\mu$ be the M\"{o}bius function, and let $ \displaystyle R_n(t) = (1/n) 
\sum_{d \text{ divides } n} \mu(n/d)/ t^d $. Further let $\kappa$ be the linear 
operator on $\Lambda$ which is 0 on the 0th,1st and 2nd graded components of $\Lambda$ 
and identity on the rest. Then 
\begin{equation*}
  \ch_t(\cM) = \kappa \lp \frac{1+tp_1}{1-t^2} \prod_{n=1}^{\infty} (1+t^np_n)^{R_n(t)}
   \rp .
\end{equation*}
Thus using \eqref{eq:formula} we can calculate the first few terms of $\ch_t(\cG)$. Of 
course $\ch_t(\cG(k,n))$ starts to be interesting when $k \geq 3$ and $n \geq 2$. In table 
Table~\ref{table:int} we list these terms for $k+n \leq 6$. 

Using Theorem~\ref{theorem:legendre} we can in principle determine $\ch_t(\cH)$. The 
theorem gives a fixed point formula and first few terms of $\ch_t(\cH)$ can be obtained 
from $\ch_t(\cG)$ by performing several iterations. Again the terms $\ch_t(\cH(k,n))$
for $n=1$ can be easily computed and are uninteresting. We list the terms corresponding
to $k+n \leq 6$ and $n>1$ in Table~\ref{table:comp}.

The calculations involving symmetric functions were done using the Maple package SF
\cite{SF} by Stembridge.

\begin{landscape}

\begin{center}

\begin{table} 
\begin{equation*}
  \renewcommand{\arraystretch}{2}
  \begin{array}{|l|l|} \hline
    (m,n) & \ch_t \lp H^*(M_{0,m|n}) \rp \\ \hline
    
    (3,2) & s\ud{3}{1}s\ud{2}{2} -ts\ud{2,1}{1}\lp s\ud{2}{2}+s\ud{1^2}{2} \rp + 
            t^2\lp s\ud{3}{1}+s\ud{2,1}{1}+s\ud{1^3}{1}\rp s\ud{1^2}{2} \\ \hline
            
    (3,3) & s\ud{3}{1}s\ud{3}{2} - ts\ud{2,1}{1}\lp s\ud{3}{2} + s\ud{2,1}{2} \rp 
            + t^2\lp s\ud{3}{1}+ s\ud{2,1}{1}+s\ud{1^3}{1} \rp \lp s\ud{2,1}{2} 
            +s\ud{1^3}{2} \rp  - t^3\lp s\ud{3}{1}+3s\ud{2,1}{2}+s\ud{1^3}{2} \rp
            s\ud{1^3}{2} \\ \hline           
             
    (4,2) & s\ud{4}{1}s\ud{2}{2} - t\lp s\ud{2^2}{1}s\ud{2}{2} + s\ud{3,1}{1}\lp 
            s\ud{2}{2} + s\ud{1^2}{2} \rp \rp + t^2\lp \lp s\ud{4}{1}+s\ud{2^2}{1}\rp 
            s\ud{1^2}{2} + \lp s\ud{3,1}{1}+s\ud{2,1^2}{1} \rp \lp s\ud{2}{2} + 
            2s\ud{1^2}{2} \rp \rp \\ 
          & - t^3 \lp s\ud{4}{1} + 2s\ud{2,2}{1} +2s\ud{3,1}{1}
            + 2s\ud{2,1^2}{1} + s\ud{1^4}{1} \rp s\ud{1^2}{2} \\ \hline
  \end{array} 
\end{equation*}
\caption{Equivariant Poincar\'{e} polynomial for the interior}
\label{table:int}
\end{table}

\begin{table}
\begin{equation*}
  \renewcommand{\arraystretch}{2}
  \begin{array}{|l|l|l|} \hline
    (m,n) & \ch_t \lp H^*(\bM_{0,m|n}) \rp & \text{Poincar\'{e} Polynomial}  \\ \hline
    
    (2,2) & (1+t^2)s\ud{2}{1}s\ud{2}{2} & 1+t^2  \\ \hline
    
    (2,3) &  (1+t^4)s\ud{2}{1}s\ud{3}{2} + 
             t^2 \lp s\ud{2}{1} \lp s\ud{3}{2} + s\ud{2,1}{2} \rp + s\ud{1^2}{1}s\ud{3}{2} \rp 
             & 1 +4t^2 + t^4 \\ \hline
             
    (3,2) &  (1+t^4) s\ud{3}{1}s\ud{2}{2} + 
             t^2 \lp s\ud{3}{1} \lp 2 s\ud{2}{2} + s\ud{1^2}{2} \rp + s\ud{2,1}{1}s\ud{2}{2} \rp 
             & 1 +5t^2 + t^4\\ \hline             
             
    (2,4) &  (1+t^6) s\ud{2}{1}s\ud{4}{2} + 
             (t^2+t^4) \lp s\ud{2}{1} \lp 2s\ud{4}{2} + s\ud{3,1}{2} s\ud{2^2}{2} \rp 
             + s\ud{1^2}{1} \lp s\ud{4}{2} + s\ud{3,1}{2} \rp \rp 
             & 1 + 11t^2+11t^4 + t^6 \\ \hline             
             
    (3,3) &  (1+t^6) s\ud{3}{1}s\ud{3}{2} + 
             (t^2+t^4) \lp s\ud{3}{1} \lp 3 s\ud{3}{2} + 2 s\ud{2,1}{2} \rp 
             + s\ud{2,1}{1} \lp 2 s\ud{3}{2} + s\ud{2,1}{2} \rp \rp 
             & 1+ 15t^2+15t^4 +t^6\\ \hline
             
    (4,2) &  (1+t^6) s\ud{4}{1} s\ud{2}{2} + 
             (t^2+t^4) \lp s\ud{4}{1} \lp 4 s\ud{2}{2} + s\ud{1^2}{2} \rp 
             + s\ud{3,1}{1} \lp 2 s\ud{2}{2} + s\ud{1^2}{2} \rp + s\ud{2,2}{1}s\ud{2}{2} \rp
             & 1+16t^2+16t^4 +t^6\\ \hline
  \end{array}
\end{equation*}
\caption{Equivariant Poincar\'{e} polynomial of $\bM_{0,m|n}$}
\label{table:comp}
\end{table}

\end{center}

\end{landscape}

\begin{bibdiv}
\begin{biblist}
\bib{Arn}{article}{
   author={Arnol{\cprime}d, V. I.},
   title={The cohomology ring of the group of dyed braids},
   language={Russian},
   journal={Mat. Zametki},
   volume={5},
   date={1969},
   pages={227--231},
   issn={0025-567X},
}

\bib{BM1}{article}{
   author={Bergstr{\"o}m, Jonas},
   author={Minabe, Satoshi},
   title={On the cohomology of moduli spaces of (weighted) stable rational
   curves},
   journal={Math. Z.},
   volume={275},
   date={2013},
   number={3-4},
   pages={1095--1108},
}

\bib{BM2}{article}{
   author={Bergstr{\"o}m, Jonas},
   author={Minabe, Satoshi},
   title={On the cohomology of the Losev-Manin moduli space},
   journal={Manuscripta Math.},
   volume={144},
   date={2014},
   number={1-2},
   pages={241--252},
}

\bib{Cey}{article}{
   author={Ceyhan, {\"O}zg{\"u}r},
   title={Chow groups of the moduli spaces of weighted pointed stable curves
   of genus zero},
   journal={Adv. Math.},
   volume={221},
   date={2009},
   number={6},
   pages={1964--1978},
   
}

\bib{Cha1}{book}{
   author={Chaudhuri, Chitrabhanu},
   title={Topological Bounds on Certain Open Subvarieties of the compactfied
   Moduli Space of Curves},
   note={Thesis (Ph.D.)--Northwestern University},
   publisher={ProQuest LLC, Ann Arbor, MI},
   date={2013},
   pages={74},
}

\bib{Get}{article}{
   author={Getzler, E.},
   title={Operads and moduli spaces of genus $0$ Riemann surfaces},
   conference={
      title={The moduli space of curves},
      address={Texel Island},
      date={1994},
   },
   book={
      series={Progr. Math.},
      volume={129},
      publisher={Birkh\"auser Boston, Boston, MA},
   },
   date={1995},
   pages={199--230},
}

\bib{GK}{article}{
   author={Getzler, E.},
   author={Kapranov, M. M.},
   title={Modular operads},
   journal={Compositio Math.},
   volume={110},
   date={1998},
   number={1},
   pages={65--126},
}

\bib{Has}{article}{
   author={Hassett, Brendan},
   title={Moduli spaces of weighted pointed stable curves},
   journal={Adv. Math.},
   volume={173},
   date={2003},
   number={2},
   pages={316--352},
   %issn={0001-8708},
   %review={\MR{1957831 (2004b:14040)}},
   %doi={10.1016/S0001-8708(02)00058-0},
}

\bib{LM}{article}{
   author={Losev, A.},
   author={Manin, Y.},
   title={New moduli spaces of pointed curves and pencils of flat
   connections},
   note={Dedicated to William Fulton on the occasion of his 60th birthday},
   journal={Michigan Math. J.},
   volume={48},
   date={2000},
   pages={443--472},
   %issn={0026-2285},
   %review={\MR{1786500 (2002m:14044)}},
   %doi={10.1307/mmj/1030132728},
}

\bib{Mac}{book}{
   author={Macdonald, I. G.},
   title={Symmetric functions and Hall polynomials},
   series={Oxford Mathematical Monographs},
   edition={2},
   note={With contributions by A. Zelevinsky;
   Oxford Science Publications},
   publisher={The Clarendon Press Oxford University Press},
   place={New York},
   date={1995},
   isbn={0-19-853489-2},
}

\bib{Pet}{article}{
   author={Petersen, D.},
   title={The structure of the tautological ring in genus one},
   eprint={http://arxiv.org/abs/1205.1586},
   date={2012},
}

\bib{SF}{article}{
   author={Stembridge, John},
   title={The SF package},
   eprint={http://www.math.lsa.umich.edu/~jrs/maple.html},
   note={A Maple package to do symbolic calculations with 
   symmetric functions},	
}

\end{biblist}
\end{bibdiv}
\end{document}